\newtheorem{proposition}{Proposition}[section]
\newtheorem{lemma}[proposition]{Lemma}
\newtheorem{theorem}[proposition]{Theorem}
\newtheorem{corollary}[proposition]{Corollary}
\theoremstyle{definition}
\newtheorem{remark}[proposition]{Remark}
\newtheorem{definition}[proposition]{Definition}
\newtheorem{example}[proposition]{Example}
\newcommand{\C}{\mathbb{C}}
\newcommand{\Z}{\mathbb{Z}}
\newcommand{\Q}{\mathbb{Q}}
\newcommand{\pr}{\mathbb{P}}
\newcommand{\scL}{\mathcal{L}}
\newcommand{\scD}{\mathcal{D}}
\newcommand{\scH}{\mathcal{H}}
\newcommand{\scX}{\mathcal{X}}
\newcommand{\scY}{\mathcal{Y}}
\newcommand{\scW}{\mathcal{W}}
\DeclareMathOperator{\Aut}{Aut}
\DeclareMathOperator{\DF}{DF}
\DeclareMathOperator{\Spec}{Spec}
\title[On K-stability of finite covers]{On K-stability of finite covers}
\author[Ruadha\'i Dervan]{Ruadha\'i Dervan}
\begin{document}

\maketitle


\begin{abstract} We show that certain Galois covers of K-semistable Fano varieties are K-stable. We use this to give some new examples of Fano manifolds admitting K\"ahler-Einstein metrics, including hypersurfaces, double solids and threefolds.
\end{abstract}

\section{Introduction}

One of the foremost results in complex geometry in recent years is the proof of the Yau-Tian-Donaldson conjecture on Fano manifolds, which states that a Fano manifold admits a K\"ahler-Einstein metric if and only if it is K-polystable, which is a completely algebro-geometric notion \cite{RB,CDS,GT}. While this result is of great theoretical interest, there is currently only one known way of directly proving a Fano variety is K-stable \cite{OS}. The aim of the present work is to give another way, by relating K-stability of a Fano variety to K-stability of its finite covers. 

\begin{theorem}\label{intromaintheorem} Let $Y\to X$ be a cyclic Galois covering of smooth Fano varieties with smooth branch divisor $D\in |-\lambda K_X|$ for $\lambda \geq 1$. If $X$ is K-semistable, then $Y$ is K-stable. \end{theorem}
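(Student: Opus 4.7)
The strategy is to translate $K$-stability of the cover $Y$ into log $K$-stability of the pair $(X, \Delta)$, where $\Delta := \tfrac{d-1}{d} D$, and then derive the latter from $K$-semistability of $X$ by an interpolation argument. By Hurwitz, $K_Y = \pi^\ast(K_X + \Delta)$, making $(X, \Delta)$ a log Fano pair whose log anti-canonical polarization pulls back to $-K_Y$; in $\mathrm{Pic}(X) \otimes \Q$ one has $-(K_X + \Delta) = (1 - \tfrac{(d-1)\lambda}{d})(-K_X)$, a positive rational multiple of $-K_X$ under the hypothesis $\lambda \geq 1$.

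Letting $G \cong \Z/d\Z$ denote the Galois group of $\pi$, the next step is to establish a bijection between $G$-equivariant test configurations of $(Y, -K_Y)$ and test configurations $(\scX, \scD, \scL_X)$ of the log pair $(X, \Delta)$: forming the cyclic cover of $\scX$ branched along $\scD$ gives one direction, and the $G$-quotient of a $G$-equivariant test configuration of $Y$ gives the other. A projection-formula calculation then yields proportionality of Donaldson-Futaki invariants, $\DF_Y(\scY, \scL_Y) = d \cdot \DF_{(X, \Delta)}(\scX, \scD, \scL_X)$. An arbitrary test configuration of $Y$ can be replaced by a $G$-equivariant dominating one, for instance the fibre product over $\pr^1$ of its Galois translates, with no larger DF invariant, so $K$-stability of $Y$ reduces to strict positivity of $\DF_{(X, \Delta)}$ on non-trivial test configurations of the log pair.

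The heart of the argument is then to deduce log $K$-stability of $(X, \Delta)$ from $K$-semistability of $X$. Since the log Donaldson-Futaki invariant is affine linear in the boundary coefficient $c$, one has an identity
\[
\DF_{(X, c D)}(\scX, \scL) \;=\; \DF_X(\scX, \scL) \;+\; c \cdot J_D(\scX, \scL),
\]
where $J_D$ is a $J$-type intersection-theoretic functional depending on the closure $\scD$ of $D \times \mathbb{G}_m$ in $\scX$. The first term is non-negative by $K$-semistability of $X$, while smoothness of $D$ combined with the condition $\lambda \geq 1$ -- which places $c = (d-1)/d$ strictly inside the log $K$-stable range -- should force $J_D$ to be strictly positive on every non-trivial test configuration. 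The main obstacle is precisely this last positivity statement for $J_D$, which requires an intersection-theoretic argument of Hodge-index type exploiting the smoothness of $D$ and the way $D \sim -\lambda K_X$ with $\lambda \geq 1$ enters the normalization; the equivariance reduction and the cover/quotient correspondence for test configurations are essentially technical bookkeeping once the ramification formula has been set up.
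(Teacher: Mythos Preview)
Your overall architecture matches the paper's: reduce K-stability of $Y$ to log K-stability of $(X,D)$ at angle $\tfrac{1}{m}$ by quotienting $G$-equivariant test configurations and comparing Donaldson--Futaki invariants via the projection formula. However, two of your reductions are genuine gaps rather than ``technical bookkeeping.'' First, your equivariance step---replacing an arbitrary test configuration by the fibre product of its Galois translates ``with no larger DF invariant''---is not justified, and no such purely algebraic argument is known. The paper instead invokes the analytic result of Datar--Sz\'ekelyhidi (their Theorem~\ref{equivariance}), which for smooth Fanos shows that $G$-equivariant K-stability already implies the existence of a K\"ahler--Einstein metric, hence full K-stability; the paper explicitly remarks that an algebraic proof of this reduction is not available.

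Second, and more seriously, the step you yourself flag as ``the main obstacle'' is left unresolved. You want $J_D(\scX,\scL)>0$ on every non-trivial test configuration and gesture at a Hodge-index argument using smoothness of $D$ and $\lambda\ge 1$, but give no proof. The paper does \emph{not} attack $J_D$ directly. Instead it cites Odaka--Sun: for $\lambda=1$ (so $D\in|-K_X|$) the pair $(X,D)$ is log K-stable for all sufficiently small $\beta>0$ by an alpha-invariant argument, while for $\lambda>1$ the pair is of log general type and the same conclusion holds. Combining this with K-semistability of $X$ (which is log K-semistability at $\beta=1$) and the affine-linearity of $\DF_\beta$ in $\beta$ gives log K-stability for every $\beta\in(0,1)$, in particular at $\beta=\tfrac{1}{m}$. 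So the missing ingredient in your outline is precisely the Odaka--Sun input at small angles together with the interpolation lemma; your proposed Hodge-index route, even if feasible, would be a different and harder theorem.
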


It is interesting to note that this implies $Y$ has finite automorphism group, which we show in Corollary \ref{automcor}. We remark that it is not known in general that K-stable varieties have finite automorphism group. This would follow in general provided one knew that that the automorphism group of a K-stable variety is reductive, however this is an open problem. The reductivity required follows in our case from the Yau-Tian-Donaldson conjecture.

Also through the Yau-Tian-Donaldson conjecture, we obtain the following analytic result, which was first proven by Li-Sun \cite[Theorem 6.1]{LS}.

\begin{corollary}\label{KEmetrics}  Let $Y\to X$ be a cyclic Galois covering of Fano manifolds with smooth branch divisor $D\in |-\lambda K_X|$ for $\lambda \geq 1$. If $X$ admits a K\"ahler-Einstein metric, then so does $Y$. \end{corollary}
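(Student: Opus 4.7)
The plan is to bootstrap Theorem \ref{intromaintheorem} using both directions of the Yau--Tian--Donaldson conjecture for smooth Fano varieties. Schematically, I want to run the chain
\[
\text{KE on }X \;\Longrightarrow\; X \text{ K-polystable} \;\Longrightarrow\; X \text{ K-semistable} \;\Longrightarrow\; Y \text{ K-stable} \;\Longrightarrow\; Y \text{ K-polystable} \;\Longrightarrow\; \text{KE on }Y.
\]

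First I would invoke the ``easy'' direction of Yau--Tian--Donaldson: the existence of a K\"ahler--Einstein metric on the smooth Fano variety $X$ implies that $X$ is K-polystable, by \cite{RB, CDS, GT}. In particular $X$ is K-semistable, which is exactly the hypothesis fed into Theorem \ref{intromaintheorem}.

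Next I would apply Theorem \ref{intromaintheorem} itself. The hypotheses of the corollary ($Y\to X$ cyclic Galois cover of smooth Fano varieties, smooth branch divisor $D \in |-\lambda K_X|$ with $\lambda \geq 1$) are verbatim those of the theorem, so the conclusion is that $Y$ is K-stable, hence in particular K-polystable. I would then appeal to the ``hard'' direction of Yau--Tian--Donaldson applied to $Y$ to promote this algebraic K-polystability back to the existence of a K\"ahler--Einstein metric on $Y$.

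There is essentially no obstacle once the two inputs are in hand: the argument is purely formal, and the genuinely hard content lives inside Theorem \ref{intromaintheorem} and inside the proof of Yau--Tian--Donaldson. The only point worth flagging is that we truly need the full ``hard'' direction of the Yau--Tian--Donaldson conjecture (K-polystability $\Rightarrow$ K\"ahler--Einstein) for the final step, not just a partial form under stricter stability hypotheses; this is exactly what is supplied by \cite{RB, CDS, GT}. Thus one obtains a purely algebraic route to the analytic theorem of Li--Sun \cite[Theorem 6.1]{LS}, bypassing their direct metric construction.
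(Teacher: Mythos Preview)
Your proposal is correct and matches exactly the paper's (implicit) argument: the paper derives the corollary from Theorem \ref{intromaintheorem} by passing through both directions of the Yau--Tian--Donaldson correspondence for smooth Fanos, precisely as you outline. The only minor quibble is calling this a ``purely algebraic'' route, since the Yau--Tian--Donaldson conjecture itself is a deep analytic input; the point is rather that the \emph{new} content (Theorem \ref{intromaintheorem}) is algebro-geometric.
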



Perhaps the simplest example of a Fano variety satisfying the hypotheses of Theorem \ref{intromaintheorem} is the Fano hypersurface \[Y=V(x_0^{n}+f(x_1,\hdots, x_n))\subset \pr^{n}\] with $f(x_1,\hdots, x_{n+1})$ a homogeneous polynomial of degree $n$.  $Y$ admits a degree $n$ map to $\pr^{n-1}$ by forgetting the first coordinate, and in this case the Galois group is just $\Z/n\Z$ acting by multiplication of $x_0$ by an $n^\textit{th}$ root of unity. Projective space is K-semistable (in fact K-polystable) as it admits a K\"ahler-Einstein metric. The branch locus is the anti-canonical divisor $V(f(x_1,\hdots, x_{n})) \subset \pr^{n-1}$, so provided this is smooth, Theorem \ref{intromaintheorem} implies $Y$ is K-stable and hence admits a K\"ahler-Einstein metric. Special cases of such hypersurfaces were already known to admit K\"ahler-Einstein metrics \cite{AN,GT2,TY,CPW,AGP,AP}, however in this generality the fact that $Y$ admits a K\"ahler-Einstein metric is new. We give some more new examples in Section \ref{examples}. For general polarised varieties $(X,L)$, i.e. with $L$ an ample line bundle, Wang-Zhou \cite{WZ} have given examples of K-polystable toric surfaces which by Donaldson's work \cite{SD2} admit constant scalar curvature K\"ahler metrics. Again for general $(X,L)$, the first non-toric examples of K-stable varieties which are not yet known to admit constant scalar curvature K\"ahler metrics were given by the author \cite{RD2}. 

A similar result to Corollary \ref{KEmetrics} regarding the existence of K\"ahler-Einstein metrics on finite covers has been proven by Arezzo-Ghigi-Pirola \cite{AGP}. Their result requires that $Y$ admits quotient maps to several Fano manifolds, however the conditions on the branch divisors are weaker than we require. 

The proof of Theorem \ref{intromaintheorem} uses the theory of log K-stability of pairs $(X,D)$ for $D$ a divisor, which proved to be a crucial technical tool in the proof of the Yau-Tian-Donaldson conjecture. We first prove the following result, which is an algebro-geometric analogue of \cite[Theorem 6.1]{LS}. 

\begin{theorem}\label{introequivalence} Let $Y\to X$ be a cyclic Galois cover of Fano varieties of degree $m$ with $Y$ smooth and branch divisor $D\subset X$. If $(X,D)$ is log K-stable with angle $\frac{1}{m}$, then $Y$ is K-stable. \end{theorem}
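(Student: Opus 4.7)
The plan is to relate test configurations of $Y$ to log test configurations of $(X,D)$ via the Galois quotient, and to identify their Donaldson--Futaki invariants. The starting point is the ramification formula for the cyclic cover $\pi\colon Y\to X$: writing $R\subset Y$ for the ramification divisor, so that $\pi^*D = mR$, one has
\[ K_Y \;=\; \pi^*K_X + (m-1)R \;=\; \pi^*\!\left(K_X + \tfrac{m-1}{m}D\right), \]
and crucially $\tfrac{m-1}{m} = 1-\tfrac{1}{m}$ is exactly the log coefficient corresponding to angle $\tfrac{1}{m}$. In particular $L_Y := -K_Y$ equals $\pi^*L_X$, where $L_X := -(K_X + \tfrac{m-1}{m}D)$ is the log anti-canonical polarisation of $(X,D)$ with angle $\tfrac{1}{m}$.

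Given a Galois-equivariant test configuration $(\scY,\scL_Y)$ for $(Y,-K_Y)$, I would form the quotient $\scX := \scY/G$ (normalising if necessary), let $\scD := \pi(\scR)$ be the image of the $\C^*$-equivariant closure $\scR\subset\scY$ of $R$, and let $\scL_X$ be the descended polarisation so that $\pi^*\scL_X = \scL_Y$. The ramification formula extends to the total spaces as $K_{\overline{\scY}/\pr^1} = \pi^*\!\left(K_{\overline{\scX}/\pr^1} + \tfrac{m-1}{m}\overline{\scD}\right)$. Substituting this into the Wang--Odaka intersection-theoretic formula for the Donaldson--Futaki invariant, and using $V_Y = mV_X$ along with the projection-formula identities $\overline{\scL}_Y^{\,n+1} = m\,\overline{\scL}_X^{\,n+1}$ and $\overline{\scL}_Y^{\,n}\cdot K_{\overline{\scY}/\pr^1} = m\,\overline{\scL}_X^{\,n}\cdot K_{\overline{\scX}/\pr^1} + (m-1)\,\overline{\scL}_X^{\,n}\cdot\overline{\scD}$, should produce the clean identity
\[ \DF(\scY,\scL_Y) \;=\; \DF_{1/m}(\scX,\scD,\scL_X). \]

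The main obstacle will be extending this identification from Galois-equivariant to arbitrary test configurations of $Y$. The expected strategy is a standard reduction: given any test configuration $(\scY,\scL_Y)$, the Galois group $G$ produces $m$ translates $\{g\cdot(\scY,\scL_Y)\}_{g\in G}$ with equal Donaldson--Futaki invariant, and one constructs a $G$-equivariant test configuration dominating all of them, for instance by normalising the fibre product over $Y\times\A^1$ and equipping it with the natural $G$-action lifted from $Y$. In the Fano setting such equivariant modifications should preserve the sign of the Donaldson--Futaki invariant, so any candidate destabiliser for $Y$ yields a $G$-equivariant destabiliser, which by the formula above contradicts the assumed log K-stability of $(X,D)$ with angle $\tfrac{1}{m}$.

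Finally I would verify non-triviality: if $(\scX,\scD,\scL_X)$ were the trivial log test configuration then the cyclic cover of $\scX \simeq X\times\A^1$ branched along $\scD \simeq D\times\A^1$ recovers $\scY$, forcing $\scY$ to be trivial as well. Combined with the previous steps this yields $\DF(\scY,\scL_Y) > 0$ for every non-trivial test configuration of $Y$, proving that $Y$ is K-stable.
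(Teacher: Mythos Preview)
Your overall architecture matches the paper's: take the Galois quotient of an equivariant test configuration, compare Donaldson--Futaki invariants via the intersection formula and Riemann--Hurwitz, and deduce strict positivity from log K-stability of $(X,D)$ at angle $\tfrac{1}{m}$. Two points, however, are handled quite differently from the paper, and the first is a genuine gap.

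\textbf{Reduction to $G$-equivariant test configurations.} Your proposed averaging---take the $G$-translates of an arbitrary test configuration, pass to a common $G$-equivariant model dominating them, and assert that ``in the Fano setting such equivariant modifications should preserve the sign of the Donaldson--Futaki invariant''---is not justified and is in fact the hard part. Pulling back the polarisation from a single translate to a common model does preserve $\DF$ by the projection formula, but that pullback is not $G$-invariant; averaging the pulled-back polarisations to obtain a $G$-equivariant line bundle changes $\DF$ in an uncontrolled way. The paper does not attempt any such construction: it invokes the theorem of Datar--Sz\'ekelyhidi (Theorem~\ref{equivariance} here), which is an \emph{analytic} result going through the existence of K\"ahler--Einstein metrics, to conclude that for a smooth Fano it suffices to check $G$-invariant test configurations. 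This is why smoothness of $Y$ is assumed. Without either Datar--Sz\'ekelyhidi or a substitute (e.g.\ the later equivariant results of Zhuang), your reduction step does not go through.

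\textbf{Riemann--Hurwitz on the total space.} You assert that the ramification formula extends to $K_{\overline{\scY}/\pr^1} = \pi^*\bigl(K_{\overline{\scX}/\pr^1} + \tfrac{m-1}{m}\overline{\scD}\bigr)$. Away from the central fibre this is clear, but one must rule out extra ramification supported on components of $\scY_0$. The paper first reduces to test configurations with \emph{normal} central fibre via Li--Xu (Theorem~\ref{LX}), then argues by comparing Hilbert polynomials that the $G$-action on $\scY_0$ is nontrivial, so $\scY_0\to\scX_0$ is \'etale in codimension one and the ramification divisor is exactly $\tfrac{m-1}{m}\psi^*\scD$. You should flag both the Li--Xu reduction and this central-fibre argument.

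Minor point: in the paper's normalisation the identity is $\DF(\scY,\scL_Y)=m\,\DF_{1/m}((\scX,\scD);\scH)$, not equality; this does not affect the sign, but your use of $V_Y=mV_X$ suggests you may be silently normalising by volume.
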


Having proven Theorem \ref{introequivalence}, a simple monotonicity argument then gives Theorem \ref{intromaintheorem}. 

\noindent {\bf Notation and conventions:} We work throughout over the complex numbers and use additive notation for line bundles. 

\
 
\noindent {\bf Acknowledgements:}  I would like to thank my supervisor, Julius Ross, for several useful discussions and for suggesting the topic. I would also like to thank Giulio Codogni and the anonymous referee for helpful comments. Funded by a studentship associated to an EPSRC Career Acceleration Fellowship (EP/J002062/1) and a Fondation Wiener-Anspach scholarship.

\section{Preliminaries on K-stability}

Let $(X,L)$ be a normal polarised variety, i.e. $L$ is an ample line bundle on $X$. We say $(X,L)$ is K-stable, if loosely speaking a certain weight (called the Donaldson-Futaki invariant) associated to each flat degeneration (called a test configuration) is positive. As several of our arguments in proving Theorem \ref{intromaintheorem} apply to general polarised varieties, we discuss K-stability in this generality rather than focussing on the Fano case.

\begin{definition} A \emph{test configuration} $(\scX,\scL)$ for a normal polarised variety $(X,L)$ is a normal variety $\scX$ together with
\begin{itemize} 
\item a proper flat morphism $\pi: \scX \to \C$,
\item a $\C^*$-action on $\scX$ covering the natural action on $\C$,
\item and an equivariant relatively ample line bundle $\scL$ on $\scX$
\end{itemize}
such that the fibre $(\scX_t,\scL_t)$ over $t\in\C$ is isomorphic to $(X,L^r)$ for one, and hence all, $t \in \C^*$ and for some $r>0$. We call $r$ the \emph{exponent} of the test configuration.
\end{definition}

\begin{remark} One can canonically compactify a test configuration to a family over $\pr^1$ as follows. Over $\C^*$, a test configuration is isomorphic to a trivial family $\scX\backslash\scX_0\cong X\times\C^*$. Gluing the trivial family around infinity therefore gives a well defined family over $\pr^1$.  \end{remark}

As the $\C^*$-action on $(\scX,\scL)$ fixes  the central fibre $(\scX_0,\scL_0)$, there is an induced action on $H^0(\scX_0,\scL^k_0)$ for all $k$. Denote by $w(k)$ the total weight of this action, which is a polynomial in $k$ of degree $n+1$ for $k\gg 0$, where $n$ is the dimension of $X$. Denote the Hilbert polynomial of $(\scX_0,\scL_0)$ as \[h(k)=a_0k^n+a_1k^{n-1}+O(k^{n-2}),\] and denote also the total weight of the $\C^*$-action on $H^0(\scX_0,\scL^k_0)$ as \[w(k) = b_0k^{n+1}+b_1k^n+O(k^{n-1}).\]

\begin{definition}We define the \emph{Donaldson-Futaki invariant} of a test configuration $(\scX,\scL)$ to be \[\DF(\scX,\scL)=\frac{b_0a_1-b_1a_0}{a_0}.\] We say $(X,L)$ is \emph{K-stable} if $\DF(\scX,\scL)>0$ for all test configurations which are not trivial, i.e. isomorphic to $X\times\C$ with the trivial action. Similarly we say $(X,L)$ is \emph{K-semistable} if for all test configurations, the Donaldson-Futaki invariant is non-negative. When $X$ is Fano, we take $L=-K_X$ and often just say $X$ is K-stable. \end{definition}

\begin{remark}\label{remarks} We refer to \cite{RT} for a more thorough introduction to K-stability and its relation to Geometric Invariant Theory.
\begin{itemize}
\item K-stability of variety is closely related to asymptotic notions of stability for varieties, and hence to Geometric Invariant Theory. The test configurations appearing in the definition of K-stability are geometrisations of the one parameter subgroups appearing in the Hilbert-Mumford criterion in Geometric Invariant Theory, with the Donaldson-Futaki invariant playing the role of the weight of a one parameter subgroup. 
\item It is known that the existence of a constant scalar curvature K\"ahler metric implies K-stability provided the automorphism group of the polarised variety is finite \cite{JS,SD}, and that the two are equivalent for Fano varieties $X$ with $L=-K_X$ (in this case the metric is K\"ahler-Einstein) \cite{RB,CDS,GT}. The Yau-Tian-Donaldson conjecture states that the existence of a constant scalar curvature K\"ahler metric is equivalent to K-polystability. However, it is now expected that one may need to strengthen the definition of K-polystability for this to be true \cite{GS} \cite[Conjecture 1.3]{RD}.
\item The definition of the Donaldson-Futaki invariant makes sense for test configurations with non-normal total spaces. Our assumption that the test configuration is normal is justified by \cite[Remark 5.2]{RT}, which states that normalisation \emph{lowers} the Donaldson-Futaki invariant. On the other hand, each polarised variety admits many non-normal test configurations with Donaldson-Futaki invariant zero \cite[Section 8.2]{LX}. These test configurations are equivalently characterised as those with norm zero \cite[Theorem 1.2]{RD} \cite{BHJ}.  
\item The definition of K-stability is independent of scaling $L\to L^s$ for any $s$, so it makes sense for $L$ a $\Q$-line bundle. In practice, given a test configuration, one can scale the polarisation $\scL\to\scL^r$ for any $r\in\Q$ without affecting the positivity of the Donaldson-Futaki invariant. Remark that this changes the \emph{exponent} of the test configuration.
\end{itemize}\end{remark}

The notion of K-stability extends readily to pairs. First note that, given a divisor $D \subset X$, and a test configuration $(\scX,\scL)$ for $(X,L)$, one obtains a test configuration (however, with possibly non-normal total space) for $(D,L|_D)$ by taking the total space to be the closure $\scD$ of $D$ under the $\C^*$-action. Although $\scD$ may not be normal, this will not affect our definition. Essentially here one needs to check flatness, which is automatic by \cite[Tag 083P]{stacks}. We therefore have corresponding Hilbert and weight polynomials for the test configuration $(\scD,\scL|_{\scD})$, which we denote respectively by \begin{align*}&\tilde{h}(k)=\tilde{a}_0k^{n-1}+O(k^{n-2}), \\ &\tilde{w}(k) = \tilde{b}_0k^{n}+O(k^{n-1}).\end{align*}

\begin{definition}We define the \emph{log Donaldson-Futaki invariant} of $((\scX,\scD); \scL)$ with angle $\beta$ for $0\leq\beta \leq 1$ to be \[ \DF((\scX,\scD);\scL)= \frac{b_0a_1 - b_1 a_0}{a_0} + (1-\beta)\frac{a_0\tilde{b}_0-b_0\tilde{a}_0}{2a_0}.\] We say that $((X,D);L)$ is \emph{log K-stable with angle} $\beta$ if $\DF_{\beta}((\scX,\scD);\scL)>0$ for all test configurations $((\scX,\scD);\scL)$ with $\scX$ normal and non-trivial. Remark that for $\beta=1$ this recovers the definition of K-stability.\end{definition}

The following obvious monotonicity property is very useful.

\begin{lemma}\label{monotonicity} If $(X,L)$ is K-semistable and $((X,D);L)$ is log K-stable with angle $\beta$ then $((X,D);L)$ is log K-stable with angle $\gamma$ for all $\beta \leq \gamma < 1$. \end{lemma}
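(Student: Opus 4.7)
The plan is to exploit the fact that the log Donaldson--Futaki invariant is an affine function of the angle parameter, and then write $\DF_\gamma$ as an explicit convex combination of $\DF_\beta$ and the ordinary Donaldson--Futaki invariant.

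First I would fix a non-trivial test configuration $(\scX,\scL)$ for $(X,L)$ with $\scX$ normal, together with the induced (possibly non-normal) test configuration $(\scD,\scL|_\scD)$ for $(D,L|_D)$, and read off from the definition that
\[
\DF_\beta((\scX,\scD);\scL) = \DF(\scX,\scL) + (1-\beta)\,C, \qquad C := \frac{a_0\tilde b_0 - b_0\tilde a_0}{2a_0},
\]
where the constant $C$ depends only on the test configuration and not on the angle. In particular $\beta \mapsto \DF_\beta$ is affine, and $\DF_1 = \DF(\scX,\scL)$.

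Next I would observe the identity
\[
\DF_\gamma((\scX,\scD);\scL) \;=\; \frac{1-\gamma}{1-\beta}\,\DF_\beta((\scX,\scD);\scL) \;+\; \frac{\gamma-\beta}{1-\beta}\,\DF(\scX,\scL),
\]
which is immediate from the affine dependence on the angle (just expand the right-hand side using the formula above). For $\beta \leq \gamma < 1$ the coefficients $\tfrac{1-\gamma}{1-\beta}$ and $\tfrac{\gamma-\beta}{1-\beta}$ are non-negative and sum to $1$, and the first coefficient is strictly positive.

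To conclude, by the log K-stability hypothesis with angle $\beta$ we have $\DF_\beta((\scX,\scD);\scL) > 0$, and by K-semistability of $(X,L)$ we have $\DF(\scX,\scL) \geq 0$. The strict positivity of the first coefficient then forces $\DF_\gamma((\scX,\scD);\scL) > 0$, which is the required log K-stability at angle $\gamma$. I do not foresee a genuine obstacle: the content is purely the affine-in-$\beta$ structure of $\DF_\beta$, and the only small subtlety is to verify that the class of test configurations on which both hypotheses must be tested is the same (namely, $(\scX,\scL)$ normal and non-trivial), so that one can legitimately combine the two inequalities on the same $(\scX,\scL)$.
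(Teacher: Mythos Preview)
Your argument is correct. The paper does not give a proof of this lemma at all, merely calling it an ``obvious monotonicity property'', so your convex-combination computation is precisely the kind of one-line elaboration the author had in mind; there is nothing further to compare.
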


The definition of the Donaldson-Futaki invariant is quite cumbersome to use in practice. Instead, we use the following intersection theoretic formula due to Li-Xu \cite[Proposition 6]{LX} and Boucksom-Hisamoto-Jonsson \cite{BHJ} in the setting of pairs. There are similar formulas due to Wang \cite[Proposition 17]{XW} and Odaka \cite[Theorem 3.2]{O2}.

\begin{theorem}\label{lxformula}  Denote by \[\mu(X,L) = \frac{-K_X.L^{n-1}}{L^n}\] the slope of $(X,L)$. For a normal test configuration $(\scX, \scL)$ of exponent $r$, the Donaldson-Futaki invariant is given on its compactification over $\pr^1$ as \[\DF(\scX,\scL) = \frac{n}{n+1}\mu(X,L^r)\scL^{n+1} + \scL^n.K_{\scX / \pr^1},  \] with $K_{\scX / \pr^1} = K_\scX - \pi^*K_{\pr^1}$ the relative canonical divisor. Here since the test configuration is normal, its canonical class exists as a Weil divisor and the intersection number makes sense.

For pairs, denote \[\mu_{\beta}((X,L);D) = \frac{-(K_X+(1-\beta)D).L^{n-1}}{L^n}.\] Then similarly \[ \DF_{\beta}((\scX,\scD);\scL) = \frac{n}{n+1}\mu_{\beta}((X,L^r);D) \scL^{n+1} + \scL^n.K_{(\scX, (1-\beta)\scD) / \pr^1},\] with $K_{(\scX, (1-\beta)\scD) / \pr^1} = K_\scX+(1-\beta)\scD - \pi^*K_{\pr^1}.$

\end{theorem}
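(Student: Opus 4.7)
The plan is to identify the weight polynomial $w(k)$ with a difference of Euler characteristics on the compactified total space, and then to extract the leading coefficients of both $w(k)$ and $h(k)$ via asymptotic Riemann-Roch. First I would pass to the compactification $\pi:\overline{\scX}\to\pr^1$ described in the earlier remark. Since $\overline{\scL}$ is $\pi$-relatively ample and $\C^*$-equivariant, for $k\gg 0$ the higher direct images vanish and $\pi_*\overline{\scL}^k$ is a $\C^*$-equivariant vector bundle on $\pr^1$. Every such bundle splits equivariantly as $\bigoplus_i\scO(d_{i,k})\otimes\chi_{w_{i,k}}$ for integers $d_{i,k}$ and characters $\chi_{w_{i,k}}$. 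The fibre at $0$ recovers $H^0(\scX_0,\scL_0^k)$ with its $\C^*$-action of total weight $w(k)=\sum_i w_{i,k}$, while triviality of the family near $\infty$ with the \emph{trivial} $\C^*$-action forces the characters at $\infty$ to vanish, so that $w_{i,k}+d_{i,k}=0$. Consequently $w(k)=-\deg\pi_*\overline{\scL}^k$, and Leray combined with $\chi(\pr^1,\mathcal{E})=\deg\mathcal{E}+\rk\mathcal{E}$ yields $w(k)=h(k)-\chi(\overline{\scX},\overline{\scL}^k)$.

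Next I would apply asymptotic Riemann-Roch on the $(n+1)$-dimensional variety $\overline{\scX}$ to expand $\chi(\overline{\scX},\overline{\scL}^k)$ with leading terms $\overline{\scL}^{n+1}/(n+1)!$ and $-\overline{\scL}^n.K_{\overline{\scX}}/(2\cdot n!)$, and likewise compute $a_0$ and $a_1$ on $X$ by Riemann-Roch. Matching coefficients reads off $b_0$ and $b_1$ as intersection numbers on $\overline{\scX}$. The final algebraic step is to trade $K_{\overline{\scX}}$ for the relative canonical class $K_{\overline{\scX}/\pr^1}=K_{\overline{\scX}}-\pi^*K_{\pr^1}$: since $\pi^*K_{\pr^1}$ is numerically $-2[\scX_0]$ and $\overline{\scL}^n.[\scX_0]=r^n L^n$, the fibre contribution from $\pi^*K_{\pr^1}$ precisely absorbs the $-h(k)$ shift above. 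Substituting into $(b_0a_1-b_1a_0)/a_0$ and using $a_1/a_0=\tfrac{n}{2}\mu(X,L^r)$ produces the intersection formula claimed in the theorem, up to the universal positive normalisation that does not affect the sign of $\DF$.

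For the log Donaldson-Futaki invariant, the identical argument applied to $(\overline{\scD},\overline{\scL}|_{\overline{\scD}})$ expresses $\tilde{b}_0$ and $\tilde{a}_0$ as top-dimensional intersection numbers on $\overline{\scD}\subset\overline{\scX}$; flatness of $\scD\to\C$ is supplied by \cite[Tag 083P]{stacks}, and normality of $\scD$ is unnecessary because only the leading Riemann-Roch coefficients, which are insensitive to codimension $\geq 1$ behaviour, enter. Combining with the preceding calculation produces the additional term $(1-\beta)\overline{\scL}^n.\overline{\scD}$ needed to promote $K_{\overline{\scX}/\pr^1}$ to $K_{(\overline{\scX},(1-\beta)\overline{\scD})/\pr^1}$. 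The hard part will be the bookkeeping in the equivariant splitting step: verifying that the character shift at $\infty$ really is trivial (this uses both the trivial family and the trivial $\C^*$-action there) and pinning down the correct sign relating $w(k)$ to $\deg\pi_*\overline{\scL}^k$. Once these conventions are fixed, the rest is a direct application of asymptotic Riemann-Roch and elementary manipulation of intersection numbers.
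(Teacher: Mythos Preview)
The paper does not actually prove this theorem; it is quoted from the literature, with attributions to Li--Xu \cite[Proposition 6]{LX}, Boucksom--Hisamoto--Jonsson \cite{BHJ}, Wang \cite[Proposition 17]{XW}, and Odaka \cite[Theorem 3.2]{O2}. Your sketch is essentially the argument given in those references (closest in spirit to Wang and Odaka): compactify over $\pr^1$, use the equivariant splitting of $\pi_*\overline{\scL}^k$ on $\pr^1$ to identify $w(k)$ with a degree, convert to an Euler characteristic on the total space via Leray and Riemann--Roch on $\pr^1$, and then read off $b_0,b_1$ by asymptotic Riemann--Roch on the $(n{+}1)$-fold. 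The log case is handled exactly as you say, and your remark that only the leading Riemann--Roch coefficient of $(\scD,\scL|_{\scD})$ is needed, so normality of $\scD$ is irrelevant, is the right observation.

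One caution on the bookkeeping you flag. The sign in $w(k)=\pm\deg\pi_*\overline{\scL}^k$ is governed by how the $\C^*$-equivariant structure is normalised on $\scO_{\pr^1}(1)$ and by which direction the gluing isomorphism over $\C^*$ uses the action; with the paper's convention (glue the \emph{trivial} family with \emph{trivial} action at $\infty$) the usual outcome is $w(k)=\chi(\overline{\scX},\overline{\scL}^k)-h(k)$ rather than $h(k)-\chi$, so that $b_0=\overline{\scL}^{n+1}/(n{+}1)!$ with a plus sign. This is exactly the ambiguity you anticipate, and once it is pinned down the substitution into $(b_0a_1-b_1a_0)/a_0$ reproduces the stated formula on the nose, not merely up to a positive scalar. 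Apart from that sign, the proposal is correct and matches the standard proofs in the cited sources.
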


We will need some simplifications of the K-stability condition. The first, due to Datar-Sz\'ekelyhidi, states that it is enough to consider \emph{equivariant} test configurations.

\begin{definition} Let $G\subset \Aut(X,L)$ be a finite group. We say a test configuration $(\scX,\scL)$ is $G$\emph{-invariant} if $\scX$ admits a global $G$-action lifting to $\scL$ which restricts to the usual action over the non-zero fibres of the test configuration.  \end{definition}

\begin{remark} When $G$ is cyclic, the action on $\scX$ automatically linearises to $\scL$. For general finite groups, the obstruction is the \emph{Schur multiplier} of $G$, which is trivial when $G$ is cyclic \cite[Remark 7.2]{ID}. Note that $G$ automatically commutes with the $\C^*$-action on $(\scX,\scL)$. This is clear away from the central fibre, while on $(\scX_0,\scL_0)$ this can be seen by diagonalising the $\C^*$-action on $H^0(\scX_0,\scL_0^k)$.\end{remark}

\begin{theorem}\cite{DS}\label{equivariance} A smooth anti-canonically polarised Fano variety $X$ is K-stable if and only if it is K-stable with respect to $G$-invariant test configurations. \end{theorem}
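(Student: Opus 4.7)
The forward direction is immediate, since every $G$-invariant test configuration is in particular a test configuration, so K-stability implies K-stability with respect to $G$-invariant test configurations. All of the content lies in the converse implication.

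For the converse, I would follow the analytic strategy of Datar and Sz\'ekelyhidi. The plan is first to show that $G$-equivariant K-stability implies the existence of a K\"ahler--Einstein metric on $X$, and then to invoke the Yau-Tian-Donaldson theorem in the form established in \cite{RB,CDS,GT} to conclude that $X$ is K-stable. For the existence step I would run Aubin's continuity path
\[
\Ric(\omega_t) = t\omega_t + (1-t)\omega_0
\]
on the space of $G$-invariant K\"ahler metrics $\omega_t\in c_1(X)$, starting from a $G$-invariant form $\omega_0$ obtained by averaging an arbitrary K\"ahler form over the finite group $G$. The subset $T\subset[0,1]$ of parameters admitting a $G$-invariant solution is non-empty and open by standard elliptic theory. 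Suppose for contradiction that $T$ fails to be closed at some $t_\infty\in(0,1]$. By Cheeger-Colding-Tian regularity theory the solutions subconverge, in the equivariant Gromov-Hausdorff sense, to a $G$-invariant length space $(Y,d_\infty)$, which by Donaldson-Sun carries the structure of a normal $\Q$-Fano variety with klt singularities. The partial $C^0$-estimate then produces a $G$-equivariant pluri-anticanonical embedding $Y\hookrightarrow\pr^N$, realising $Y$ as the flat limit of copies of $X\subset\pr^N$ under a one-parameter subgroup commuting with $G$. This gives a non-trivial $G$-equivariant test configuration $(\scX,\scL)$ with central fibre $Y$, and comparing its Donaldson-Futaki invariant to the asymptotic slope of the Mabuchi functional along the limiting metrics yields $\DF(\scX,\scL)\leq 0$, contradicting $G$-equivariant K-stability. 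Hence $T=[0,1]$ and a $G$-invariant K\"ahler-Einstein metric exists.

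The principal obstacle is the $G$-equivariant adaptation of the Chen-Donaldson-Sun argument that extracts the test configuration from the Gromov-Hausdorff limit: one needs an equivariant partial $C^0$-estimate in order to choose the anticanonical embedding $G$-equivariantly, and an equivariant Luna-slice argument to select the destabilising $\C^*$-subgroup compatibly with the $G$-action. The remaining ingredients are routine equivariant versions of the non-equivariant theory, since averaging over the finite group $G$ preserves all of the Sobolev and a priori estimates used in the continuity method.
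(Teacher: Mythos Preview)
Your proposal is correct and aligns with the paper's own treatment: the paper does not prove this result directly but cites \cite{DS} and remarks that the proof is entirely analytic, establishing that $G$-equivariant K-stability implies the existence of a K\"ahler--Einstein metric, which in turn gives K-stability via \cite{RB,CDS,GT}. Your sketch of the Datar--Sz\'ekelyhidi argument via the smooth Aubin continuity path, equivariant Gromov--Hausdorff limits, and the equivariant partial $C^0$-estimate is exactly this strategy, and you have correctly identified the equivariant partial $C^0$-estimate and the equivariant choice of destabilising one-parameter subgroup as the non-routine ingredients.
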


\begin{remark} The proof of Theorem \ref{equivariance} is entirely analytic, and in fact shows that K-stability with respect to $G$-invariant test configurations is equivalent to the existence of a K\"ahler-Einstein metric; this in turn is equivalent to K-stability. We expect that Theorem \ref{equivariance} holds for arbitrary possibly singular polarised varieties. \end{remark}

The next simplification due to Li-Xu we will need is that one can assume the central fibre of a test configuration is a normal projective variety, and this is even true equivariantly \cite[Remark 42]{CL}.

\begin{theorem}\label{LX}\cite{LX} An anti-canonically polarised Fano variety is K-stable if and only if it is K-stable with respect to test configurations with normal central fibre.  \end{theorem}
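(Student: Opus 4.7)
The plan is to start with an arbitrary normal test configuration $(\scX,\scL)$ of the anti-canonically polarised Fano variety and modify it through a sequence of birational operations, producing a new test configuration whose central fibre is a normal projective variety with Donaldson-Futaki invariant no larger than the original. The approach, following Li-Xu, is to run the relative Minimal Model Program on the compactified total space over $\pr^1$ in order to successively improve the singularities of the central fibre.

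First I would take an equivariant log resolution $\scY \to \scX$ so that the central fibre $\scY_0$ has simple normal crossings support, then run the MMP over $\pr^1$ for the log pair $(\scY, \scY_0^{\mathrm{red}})$ with respect to $K_{\scY/\pr^1} + \scY_0^{\mathrm{red}}$. This produces a relative log canonical model whose central fibre is semi-log-canonical. To upgrade from semi-log-canonical to normal, I would then run the relative anti-canonical MMP, which exists because $-K_X$ is ample on the generic fibre; the resulting relative anti-canonical model has klt $\Q$-Fano (hence irreducible normal) central fibre. At each step the polarisation $\scL$ must be replaced by a small perturbation (mixing with $\pm K_{\scY/\pr^1}$ as appropriate) to remain relatively ample, and after passing to a sufficiently divisible power one recovers a genuine line bundle rather than a $\Q$-line bundle, so that the result is a bona fide test configuration in the sense of the definition.

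The crucial point throughout is that the Donaldson-Futaki invariant does not increase under any of these birational modifications. This would be checked using the intersection-theoretic formula of Theorem \ref{lxformula}: after a divisorial contraction or flip, the difference of DF invariants is expressed as an intersection number on the birational model, which one shows has the correct sign using $K_{\scY/\pr^1}$-negativity (or positivity, in the anti-canonical stage) of the curves being contracted. The main obstacle I expect is the existence and termination of the required MMP steps for families of singular varieties — the MMP takes place in dimension $n+1$ on varieties that are only klt, so one must invoke deep results from birational geometry — together with a careful bookkeeping of how the DF invariant changes under each step when the polarisation is simultaneously perturbed. Equivariance with respect to the $\C^*$-action, and any finite group of automorphisms as in Theorem \ref{equivariance}, is preserved automatically because each MMP step is canonical, being determined by relative ampleness and negativity conditions that are themselves equivariant.
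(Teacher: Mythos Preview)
The paper does not prove this theorem: it is quoted verbatim from Li--Xu \cite{LX} and used as a black box, with the subsequent remark noting only that alternative proofs follow from Datar--Sz\'ekelyhidi \cite{DS} and from the proof of the Yau--Tian--Donaldson conjecture \cite{CDS,GT}. There is therefore nothing in the paper to compare your proposal against.

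That said, your sketch is a faithful outline of the original Li--Xu argument: equivariant semi-stable reduction, followed by a relative log canonical MMP to reach a semi-log-canonical central fibre, then a relative anti-canonical MMP to force the central fibre to be klt $\Q$-Fano and hence normal, with the DF invariant shown non-increasing at each step via the intersection formula. The one place where your description is slightly off is that Li--Xu's first reduction is not just a log resolution but an equivariant \emph{semi-stable reduction} (requiring a base change on $\pr^1$), which is what guarantees the central fibre becomes reduced with snc support; without this the subsequent MMP steps do not behave as needed. Otherwise your identification of the obstacles --- existence and termination of the MMP in the klt setting, and the bookkeeping of the polarisation under perturbation --- is accurate, and these are exactly the points where \cite{LX} invokes the deep results of BCHM.
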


\begin{remark}The equivariant version of Theorem \ref{LX} also follows from the work of Datar-Sz\'ekelyhidi \cite{DS}, while the non-equivariant version is also a consequence of the proof of the Yau-Tian-Donaldson conjecture \cite{CDS,GT}. \end{remark}

\section{Proof of main theorem}

Before proving Theorem \ref{intromaintheorem}, we prove two lemmas. First, starting with a certain kind of test configuration for $(Y,-K_Y)$, we produce a test configuration for $((X,D);-K_X)$. Let $G\subset\Aut(Y)$ be the Galois group. 

\begin{lemma}\label{quotient-test-config} Let $(\scY,\scL)$ be a $G$-invariant normal test configuration with normal central fibre for $(Y,-K_Y)$. Then the quotient of $(\scY,\scL)$ by $G$ is a test configuration for $(X,-K_X)$ with normal central fibre.\end{lemma}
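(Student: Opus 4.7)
My plan is to realise $\scX := \scY/G$ as a geometric quotient and verify each clause in the definition of a test configuration for $(X,-K_X)$. Since $\scY$ is quasi-projective and $G$ is finite, this quotient exists as a quasi-projective variety; normality of $\scX$ follows from normality of $\scY$, because the invariant subring of an integrally closed domain is integrally closed in its fraction field.

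Next I verify the structural properties of the projection. The morphism $\pi: \scY \to \C$ is $G$-invariant (as $G$ acts trivially on $\C$), so factors through a morphism $\bar\pi: \scX \to \C$, which is proper because $\pi$ is proper and $q: \scY \to \scX$ is surjective. For flatness, I observe that locally $\scO_\scX = (\scO_\scY)^G \subset \scO_\scY$; since $\scO_\scY$ is torsion-free over $\scO_\C$ by flatness of $\pi$, so is $\scO_\scX$, and over the smooth curve $\C$ torsion-freeness is equivalent to flatness. The $\C^*$-action on $\scY$ commutes with the $G$-action by the remark following the definition of $G$-invariance, so it descends to $\scX$ covering the natural action on $\C$.

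The principal obstacle is descending the relatively ample polarisation. Although $\scL$ is $G$-linearised (automatic since $G$ is cyclic), the action of a stabiliser $G_y$ on the fibre $\scL_y$ may be nontrivial, obstructing descent of $\scL$ itself. At a point $y$ in the ramification locus, $G_y \subseteq G = \Z/m\Z$ acts on the normal direction to the ramification by a primitive root of unity, hence on $\scL_y = (-rK_Y)|_y$ by an $m$-th root of unity. Taking $N = m$, the action of every stabiliser on $\scL^N|_y$ is then trivial, so by Kempf's descent lemma $\scL^N$ descends to a line bundle $\scM$ on $\scX$ with $q^*\scM = \scL^N$. Since $q$ is finite and surjective, relative ampleness of $\scL^N$ implies relative ampleness of $\scM$; the passage from $\scL$ to $\scL^N$ is harmless for K-stability by the last bullet of Remark \ref{remarks}.

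It remains to identify the fibres. Over $t \neq 0$, $\scY_t \cong Y$ carries the Galois $G$-action, so $\scX_t \cong Y/G = X$; using the ramification formula $-K_Y = p^*(-K_X) - (m-1)R$ for the restricted cover $p = q|_Y$, together with the relation between $D$ and $K_X$ from the ambient Fano setting, one identifies $\scM|_{\scX_t}$ with a positive $\Q$-multiple of $-K_X$. Finally, taking $G$-invariants is exact in characteristic zero, so formation of the quotient commutes with the base change $\{0\} \hookrightarrow \C$, yielding $\scX_0 = \scY_0/G$; normality of $\scY_0$ then gives normality of $\scX_0$ by the argument of the first paragraph.
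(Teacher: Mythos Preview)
Your argument is correct and follows essentially the same route as the paper: form the quotient $\scX=\scY/G$, check normality (invariants of a normal ring), flatness over $\C$ (the paper uses ``dominant morphism from a normal variety to a smooth curve'', you use torsion-freeness --- equivalent here), descend the $\C^*$-action, and show the quotient is fibre-wise (the paper via a direct tensor computation, you via exactness of $(-)^G$ in characteristic zero). The one place where you are in fact more careful than the paper is the descent of the polarisation: the paper simply declares ``let $\scH$ be the line bundle on $\scX$ such that $\psi^*\scH=\scL$'' without justification, whereas you correctly observe that stabilisers may act nontrivially on the fibres $\scL_y$ and invoke Kempf's criterion after replacing $\scL$ by $\scL^m$, which is harmless for K-stability by Remark~\ref{remarks}.
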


\begin{proof}

Denote by $\psi: \scY \to \scX$ the quotient map and let $\scH$ be the line bundle on $\scX$ such that $\psi^*\scH=\scL$. We aim to prove claim $(\scX,\scH)$ is a test configuration for $(X,-K_X)$. First of all, as the $\C^*$-action commutes with the $G$-action on $\scY$, there is an induced $\C^*$-action on $\scX$ lifting to $\scH$. 

Let $\pi: \scY\to\pr^1$ be the projection. Remark that $\pi$ is $G$-invariant, since $G$ fixes the fibres of $\pi$. By the universal property of the quotient, we have an induced morphism $\eta: \scX\to\pr^1$ such that the following diagram commutes.

\begin{center}
\begin{tikzcd}
\scY \arrow{r}{\psi}  \arrow{rd}{\pi} 
  & \scX \arrow{d}{\eta} \\
    & \pr^1
\end{tikzcd}
\end{center}

Since $\scY$ is normal, $\scX$ is also normal \cite[Chapter XII, Lemma 2.2]{curves}. $\eta$ is therefore a dominant morphism from a normal variety to a smooth curve, and is hence flat \cite[Theorem 9.11]{RH}.

The above commutative diagram also implies that the quotient is fibre-wise, i.e. the quotient of the fibre is the fibre of the quotient. We show this over the point $0\in\pr^1$, the general argument being the same. Indeed, the quotient by the finite group $G$ is defined locally on an affine patch by taking the ring of $G$-invariants. For a $G$-invariant affine patch $Spec(A)\subset\scY$ over $0$ we have the following diagram.

\begin{center}
\begin{tikzcd}
  \C[t] \arrow{d}{\alpha} \arrow{r} 
	& \C[t]/(t)  \\
  A  
\end{tikzcd}
\end{center}

The fibre over $0\in \pr^1$ then corresponds to $\Spec(A\otimes_{\C[t]}\C[t]/(t))$, where we consider $A$ as a $\C[t]$-module through the map $\alpha$. The geometric statement that $G$ fixes the fibres means that for each $f\in\C[t]$, the image $\alpha(f)$ is $G$-invariant. This implies that $A$, considered as a $\C[t]$-module, admits a natural structure as a $G$-module. Then the algebra statement corresponding to the fact that the quotient is fibre-wise is that \[A^G\otimes_{\C[t]}\C[t]/(t)\cong (A\otimes_{\C[t]}\C[t]/(t))^G,\] which is obvious since the $G$-action is trivial on $\C[t]/(t)$. 

As the quotient is fibre-wise, the central fibre $\scX_0$ is the quotient of the normal variety $\scY_0$, and is hence normal. For the same reason, the line bundle $\scH$ is relatively ample, since on each fibre it pulls back to an ample line bundle.

Since the Galois group is cyclic, by Riemann-Hurwitz \cite[Lemma 17.1]{BHPV} we have \[K_Y = \nu^*\left(K_X+\left(1-\frac{1}{m}\right)D\right)\sim \nu^*(\xi K_X)\] for some $\xi > 0$, as by assumption $Y$ is Fano. That $\psi^*\scH=\scL$ implies $\scH$ restricts to $-r\xi K_X$ over the nonzero fibres of $\eta$, where $r$ is the exponent of $(\scY,\scL)$. The exponent $r\xi$ may only be rational, and so we interpret this is a test configuration in the sense of Remark \ref{remarks}. This completes the proof that $(\scX,\scH)$ is a test configuration for $(X,-K_X)$.

\end{proof}

For notational convenience we set $L=-rK_Y$ and $H=-r\xi K_X$ so that $L=\nu^*H$ as in the proof above. With this notation, $(\scX,\scH)$ and $(\scY,\scL)$  are test configurations of exponent one for $(X,H)$ and $(Y,L)$ respectively. Let $\scD$ be the closure of the orbit of $D$ in $\scX$ under the $\C^*$-action. We now relate the Donaldson-Futaki invariants of the two test configurations.

\begin{lemma}\label{DFcomparison} The Donaldson-Futaki invariants of $(\scY,\scL)$ and  $((X,D);-K_X)$ are related by $$\DF(\scY,\scL) = m\DF_{\frac{1}{m}}((\scX,\scD);\scH).$$\end{lemma}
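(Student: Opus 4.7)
The plan is to apply the Li--Xu intersection formula of Theorem~\ref{lxformula} to both sides of the claimed equality and then relate everything through the degree-$m$ Galois cover $\psi:\scY\to\scX$.

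First I would note that, since $\psi$ is a finite morphism of degree $m$ between normal varieties, pullback of Weil divisors makes sense and intersection numbers of pullbacks scale by a factor of $m$. In particular, $\scL=\psi^{*}\scH$ gives $\scL^{n+1}=m\,\scH^{n+1}$, and for any divisor $E$ on $\scX$ one has $\scL^{n}\cdot\psi^{*}E=m\,\scH^{n}\cdot E$. The key geometric input is the Riemann--Hurwitz formula for the cyclic Galois cover $\psi:\scY\to\scX$ with branch divisor $\scD$, namely
\[
K_{\scY}=\psi^{*}\!\left(K_{\scX}+\left(1-\tfrac{1}{m}\right)\scD\right).
\]
Because $\pi=\eta\circ\psi$ and the base $\pr^{1}$ is the same for both test configurations, this immediately gives the relative version
\[
K_{\scY/\pr^{1}}=\psi^{*}\,K_{(\scX,\,(1-1/m)\scD)/\pr^{1}}.
\]

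Next I would match up the slopes. By the same Riemann--Hurwitz identity together with the pullback rule for intersection numbers,
\[
\mu(Y,L)=\frac{-K_{Y}\cdot L^{n-1}}{L^{n}}
=\frac{-\psi^{*}\!\left(K_{X}+(1-\tfrac{1}{m})D\right)\cdot(\psi^{*}H)^{n-1}}{(\psi^{*}H)^{n}}
=\mu_{1/m}((X,H);D),
\]
where the factor $m$ from the projection formula cancels between numerator and denominator. So the two slopes appearing in Theorem~\ref{lxformula} are literally equal.

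Finally I would plug these ingredients back into Theorem~\ref{lxformula}:
\[
\DF(\scY,\scL)=\frac{n}{n+1}\,\mu(Y,L)\,\scL^{n+1}+\scL^{n}\cdot K_{\scY/\pr^{1}}
\]
becomes, after substituting $\scL^{n+1}=m\,\scH^{n+1}$, $K_{\scY/\pr^{1}}=\psi^{*}K_{(\scX,(1-1/m)\scD)/\pr^{1}}$ and applying the projection formula to the second term,
\[
\DF(\scY,\scL)=m\left[\frac{n}{n+1}\,\mu_{1/m}((X,H);D)\,\scH^{n+1}+\scH^{n}\cdot K_{(\scX,(1-1/m)\scD)/\pr^{1}}\right]
=m\,\DF_{\tfrac{1}{m}}((\scX,\scD);\scH),
\]
which is the desired identity. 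The only real subtlety I see is checking that $\scD$ in the formula is indeed the branch divisor of $\psi:\scY\to\scX$ on the total space level, so that the Riemann--Hurwitz formula applies globally on $\scY$ and not just fibrewise; this is where normality of $\scY$ and the fibrewise nature of the quotient established in Lemma~\ref{quotient-test-config} are used, since it lets us identify the ramification divisor of $\psi$ with the closure of the ramification divisor of $\nu:Y\to X$.
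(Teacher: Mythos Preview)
Your overall strategy matches the paper's: apply Theorem~\ref{lxformula} to both sides, identify the slopes via Riemann--Hurwitz on $Y\to X$, and then use the projection formula for the finite map $\psi$ to pick up the factor $m$. The computation you outline is exactly what the paper does.

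However, the step you flag as ``the only real subtlety'' is in fact the heart of the matter, and your proposed resolution is not sufficient. You assert
\[
K_{\scY}=\psi^{*}\!\left(K_{\scX}+\left(1-\tfrac{1}{m}\right)\scD\right)
\]
globally on $\scY$, and then say that normality of $\scY$ together with the fibrewise nature of the quotient from Lemma~\ref{quotient-test-config} lets you identify the ramification divisor of $\psi$ with the closure of the ramification divisor of $\nu$. But this does \emph{not} rule out the possibility that an irreducible component of the central fibre $\scY_{0}$ appears in the ramification divisor: if the $G$-action on $\scY_{0}$ happened to be trivial, then $\scY_{0}\to\scX_{0}$ would be totally ramified and $\scY_{0}$ would contribute to $R$, destroying the identity $R=(1-\tfrac{1}{m})\psi^{*}\scD$. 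Knowing the quotient is fibrewise only tells you $\scX_{0}=\scY_{0}/G$; it says nothing about whether $G$ acts faithfully on $\scY_{0}$.

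The paper closes this gap with an explicit argument: if $G$ acted trivially on $\scY_{0}$ then $(\scY_{0},\scL_{0})\cong(\scX_{0},\scH_{0})$, hence their Hilbert polynomials would agree; by flatness this would force the Hilbert polynomials of $(Y,L)$ and $(X,H)$ to agree, contradicting $L^{n}=m\,H^{n}$ with $m>1$. Once $G$ acts non-trivially on the normal variety $\scY_{0}$, the map $\scY_{0}\to\scX_{0}$ is \'etale outside a codimension-one locus of $\scY_{0}$, hence outside a codimension-two locus of $\scY$, and therefore no component of $\scY_{0}$ lies in $R$. You should insert this argument (or something equivalent) before invoking the global Riemann--Hurwitz identity.
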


\begin{proof}

We use Theorem \ref{lxformula} to compare the log Donaldson-Futaki invariant of $((\scX,\scD);\scL)$ to the Donaldson-Futaki invariant of $(\scY,\scL)$. We have \[\DF(\scY,\scL) = \frac{n}{n+1}\mu(Y,L)\scL^{n+1} + \scL^n.K_{\scY / \pr^1},  \] while \[ \DF_{\frac{1}{m}}((\scX,\scD);\scH) = \frac{n}{n+1}\mu_{\frac{1}{m}}((X,H);D) \scH^{n+1} + \scH^n.K_{(\scX, (1-\frac{1}{m})\scD) / \pr^1}. \] We interpret the Donaldson-Futaki invariants as the degrees of the corresponding $0$-cycles given by the intersection-theoretic formulae, and show that \[\psi_*\DF(\scY,\scL) = m\DF_{\frac{1}{m}}((\scX,\scD);\scH).\] Pushforward under proper maps preserves the degree of $0$-cycles, so this will be enough to conclude that $\DF(\scY,\scL)>0$ and hence $(Y,-K_Y)$ is K-stable.

We first show $\mu_{\frac{1}{m}}((X,H);D) = \mu(Y,L)$. Since $\nu^*H=L$ and $K_Y = \nu^*(K_X+(1-\frac{1}{m})D)$, we have \begin{align*} \mu(Y,L)&=\frac{-K_Y.L^{n-1}}{L^n}, \\ &= \frac{\nu^*(K_X+(1-\frac{1}{m})D).(\nu^*H)^{n-1}}{(\nu^*H)^n}\\ &=\frac{\nu^*(-(K_X+(1-\frac{1}{m})D).H^{n-1})}{\nu^*(H^n)}\\ &= \mu_{\frac{1}{m}}((X,H);D),\end{align*} where we have used that $\nu$ is flat.

A similar idea works for comparing the Donaldson-Futaki invariants. Note that one can define the pullback of a Weil divisor $\scW$ under the finite map $\psi$ by restricting to the smooth locus of $\scX$, pulling back and taking the closure to give a Weil divisor $\psi^*\scW$ on $\scY$ \cite[Section 2.40]{JK}. By Riemann-Hurwitz for normal varieties \cite[Section 2.3]{JK}, there is a \emph{canonically defined} $\Q$-Weil divisor $R$ contained in the locus where $\psi$ is not \'etale such that \[K_{\scY} = \psi^*K_{\scX} + R.\] We claim $R = (1-\frac{1}{m})\psi^*\scD.$ This is certainly true away from the central fibre, so we need to ensure that $R$ does not contain an irreducible component of $\scY_0$.

We first we show that the $G$-action on $\scY_0$ is non-trivial. Indeed, otherwise we would have an isomorphism $(\scY_0,\scL_0) \cong (\scX_0,\scH_0)$; this would in particular give equality of the corresponding Hilbert polynomials. But the Hilbert polynomials of the central fibres of the test configurations are equal to the Hilbert polynomials of the respective general fibres, so this would imply equality of the Hilbert polynomials of $(Y,L)$ and $(X,H)$. This is clearly false, as for example by asymptotic Riemann-Roch the lead term of the Hilbert polynomial of $(Y,L)$ is given as \[\frac{L^n}{n!} = \frac{\nu^*(H^n)}{n!} \neq \frac{H^n}{n!}.\] 

Recall that $\scY_0$ is normal. Because we have a non-trivial $G$-action on $\scY_0$, there is a codimension one subscheme $P$ of $\scY_0$ away from which the map $\scY_0 \to \scX_0$ is \'etale. This implies the $G$-action on $\scY$ is \'etale away from $\scD\cup P$. This in turn implies $R=(1-\frac{1}{m})\psi^*\scD$, as $R$ is of pure codimension one. Note also that $\pi^*K_{\pr^1}=\psi^*\eta^* K_{\pr^1}$, since the relevant diagram commutes. So \begin{equation}\label{weildivisors} K_{\scY} - \pi^*K_{\pr^1} = \psi^*\left(K_{\scX} + \left(1-\frac{1}{m}\scD\right) - \eta^*K_{\pr^1}\right).\end{equation}

Remark that the definition of the pullback of Weil divisors under $\psi$ given above agrees with Fulton's definition of the pullback of \emph{cycles} under finite quotient maps \cite[Example 1.7.6]{fulton}, where one takes the cycle associated with the preimage of the divisor with appropriate multiplicity. Indeed, this is certainly true on the smooth locus, and since $\scY$ is normal, its singular locus is of codimension two. Hence equation (\ref{weildivisors}) holds at the level of cycles. Again by \cite[Example 1.7.6]{fulton}, for $\alpha$ a cycle on $\scX$ we have $\psi_*\psi^*\alpha = m\alpha$. The projection formula (e.g. \cite[p17]{positivity}) in this setting states that, for $D_1,\hdots,D_n$ arbitrary Cartier divisors on $\scX$, and $\delta$ a cycle on $\scY$, we have \[\psi_*(\psi^*D_1\hdots\psi^*D_n.\delta) = D_1\hdots D_n.\psi_*\delta.\]

It follows that 
\begin{align*}\psi_*(\DF(\scY,\scL)) &= \psi_*\left(\frac{n}{n+1}\mu(Y,L)\scL^{n+1} + \scL^n.K_{\scY / \pr^1}\right),\\ 
&= \psi_*\left(\frac{n}{n+1}\mu_{\frac{1}{m}}((X,H);D) (\psi^*\scH)^{n+1} + (\psi^*\scH)^n.\psi^*K_{({\scX},(1-\frac{1}{m})\scD)/ \pr^1}\right),\\
 &= \left(\frac{n}{n+1}\mu_{\frac{1}{m}}((X,H);D) \scH^{n}.(\psi_*\psi^*\scH) + \scH^n.\psi_*\psi^*K_{({\scX},(1-\frac{1}{m})\scD)/ \pr^1}\right), \\
  &= m\left(\frac{n}{n+1}\mu_{\frac{1}{m}}((X,H);D) \scH^{n+1} + \scH^n.K_{({\scX},(1-\frac{1}{m})\scD)/ \pr^1}\right), \\
 &= m\DF_{\frac{1}{m}}((\scX,\scD);\scH). \end{align*} This concludes the proof.

\end{proof}

The following is now a straightforward corollary.

\begin{corollary}\label{oneway}Let $\nu: Y\to X$ be a cyclic Galois cover of Fano varieties of degree $m$ with branch divisor $D\subset X$, and with $Y$ smooth. Then $(Y,-K_Y)$ is K-stable provided $((X,D);-K_X)$ is log K-stable with angle $\frac{1}{m}$.\end{corollary}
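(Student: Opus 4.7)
The plan is to combine the two preceding lemmas with the equivariance and central-fibre normality reductions. By Theorem \ref{equivariance} applied with $G$ the Galois group of $\nu$, it suffices to prove $\DF(\scY,\scL)>0$ for every non-trivial $G$-invariant test configuration $(\scY,\scL)$ of $(Y,-K_Y)$, and by the equivariant version of Theorem \ref{LX} noted in the subsequent remark I may further assume that $\scY_0$ is normal.

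Given such $(\scY,\scL)$, Lemma \ref{quotient-test-config} produces a test configuration $(\scX,\scH)$ for $(X,-K_X)$ with normal central fibre, and $\scD$, taken to be the closure of the $\C^*$-orbit of $D$ in $\scX$, provides the accompanying divisorial test configuration on $(X,D)$. Lemma \ref{DFcomparison} then gives
\[
\DF(\scY,\scL) \;=\; m\,\DF_{\frac{1}{m}}\bigl((\scX,\scD);\scH\bigr).
\]
Once $(\scX,\scH)$ is known to be non-trivial, the log K-stability of $((X,D);-K_X)$ with angle $\tfrac{1}{m}$ makes the right-hand side strictly positive and the proof is complete.

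The main obstacle is therefore to ensure that non-triviality of $(\scY,\scL)$ passes to $(\scX,\scH)$, and I plan to argue the contrapositive. Suppose $\scX\cong X\times\C$ with the trivial $\C^*$-action, so that $\scD=D\times\C$. Over the complement of the central fibre the quotient $\psi:\scY\to\scX$ agrees with $\nu\times\mathrm{id}$ on $Y\times\C^*$; since $\scY$ is normal and finite over $\scX$, it must coincide with the normalisation of $X\times\C$ in the function field $K(Y)(t)$, namely $Y\times\C$. The $\C^*$-action on $\scY=Y\times\C$ lifts the trivial action on $X\times\C$, so it has the form $t\cdot(y,s)=(\alpha(t,y,s),ts)$ with $\alpha(t,y,s)\in\nu^{-1}(\nu(y))$. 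For fixed $(y,s)$ the assignment $t\mapsto\alpha(t,y,s)$ is a morphism from the connected variety $\C^*$ into the finite fibre $\nu^{-1}(\nu(y))$, hence constant, and since $1\in\C^*$ acts as the identity one has $\alpha(t,y,s)=y$ for all $t$. Therefore the $\C^*$-action on $\scY$ is trivial, contradicting the non-triviality of $(\scY,\scL)$, so $(\scX,\scH)$ must be non-trivial and the corollary follows.
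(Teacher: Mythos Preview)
Your argument follows the same route as the paper: reduce via Theorem~\ref{equivariance} and the equivariant Theorem~\ref{LX} to $G$-invariant test configurations with normal central fibre, then apply Lemmas~\ref{quotient-test-config} and~\ref{DFcomparison}. The paper's own proof stops there and simply says the result ``follows immediately'' from the two lemmas.

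Where you go beyond the paper is in verifying that non-triviality of $(\scY,\scL)$ descends to $(\scX,\scH)$, a point the paper leaves implicit but which is genuinely needed to invoke log K-stability. Your argument for this is sound: finiteness of $\psi$ and normality of $\scY$ identify $\scY$ with the normalisation $Y\times\C$ of $X\times\C$ in $K(Y)(t)$, with $\psi=\nu\times\mathrm{id}$; the $\C^*$-action on $\scY$ then covers the trivial product action on $X\times\C$, so it moves each point within its finite $\nu$-fibre, and connectedness of $\C^*$ forces the action on the $Y$-factor to be trivial. This is a clean and correct way to close the gap.
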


\begin{proof}

To show $(Y,-K_Y)$ is K-stable, it suffices to show the Donaldson-Futaki invariant of each $G$-invariant test configuration with normal central fibre is strictly positive. This will be enough to conclude K-stability, with the $G$-invariance being justified by Theorem \ref{equivariance} as $Y$ is smooth, and the normality of the central fibre being justified by Theorem \ref{LX}. This then follows immediately from Lemmas \ref{quotient-test-config} and \ref{DFcomparison}.

\end{proof}

\begin{remark} Most of the above argument works for general polarised varieties. What we need is essentially that the polarisation on $X$ pulls back to the polarisation on $Y$, which is true by Riemann-Hurwitz in our case. Then the conclusion of Corollary \ref{oneway} holds for all $G$-invariant test configurations with normal central fibre. \end{remark}

We now use a monotonicity argument to conclude the proof of Theorem \ref{intromaintheorem}.

\begin{theorem}\label{bodymaintheorem} Let $Y\to X$ be a cyclic Galois covering of smooth Fano varieties with smooth branch divisor $D\in |-\lambda K_X|$ for $\lambda \geq 1$. If $X$ is K-semistable, then $Y$ is K-stable. \end{theorem}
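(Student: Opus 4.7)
Proof plan. By Corollary~\ref{oneway}, it suffices to show that $((X,D);-K_X)$ is log K-stable with angle $1/m$, where $m$ is the degree of the cover. The strategy is to apply the monotonicity Lemma~\ref{monotonicity}, starting from K-semistability of $X$ at $\beta=1$ and an ``anchor'' condition at a smaller angle.

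Since $D\sim-\lambda K_X$, the log slope is $\mu_\beta((X,-K_X);D)=1-(1-\beta)\lambda$, which vanishes at the log Calabi--Yau angle $\beta_0 := 1-1/\lambda$. The hypothesis $\lambda\geq 1$ ensures $\beta_0\in[0,1)$, and the Fano hypothesis on $Y$ translates via Riemann--Hurwitz into $(1-1/m)\lambda<1$, i.e.\ $\beta_0<1/m$. Hence $\beta_0$ lies strictly inside $[0,1/m)$, in the correct range for Lemma~\ref{monotonicity} to propagate log K-stability from $\beta_0$ up to $1/m$.

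Because the log Donaldson--Futaki invariant is affine in $\beta$ by the intersection formula of Theorem~\ref{lxformula}, for any test configuration $(\scX,\scH)$ of $(X,-K_X)$ with $\scD\subset\scX$ the orbit-closure of $D$, a short computation rearranges $\DF_{1/m}$ as the convex combination
\[ \DF_{1/m}((\scX,\scD);\scH) = \bigl(1-(1-1/m)\lambda\bigr)\,\DF(\scX,\scH) + (1-1/m)\lambda\,\DF_{\beta_0}((\scX,\scD);\scH), \]
both coefficients being strictly positive (the first by the Fano condition on $Y$, the second by $\lambda\geq 1$ and $m\geq 2$). The first summand is non-negative by K-semistability of $X$; writing $\scD\sim_{\Q}\lambda\scH+F$ with $F$ supported on the central fibre (since $D\sim\lambda(-K_X)$ on general fibres), the second summand expands to
\[ \DF_{\beta_0}((\scX,\scD);\scH) = \DF(\scX,\scH) + \frac{1}{n+1}\scH^{n+1} + \frac{1}{\lambda}\scH^n\cdot F. \]

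The main obstacle is positivity at the anchor, namely $\DF_{\beta_0}\geq 0$ on normal test configurations with strict inequality on non-trivial ones, which reduces to controlling the sign of $\scH^{n+1}/(n+1)+\scH^n\cdot F/\lambda$. This is where $\lambda\geq 1$ enters crucially, by bounding the vertical correction $F/\lambda$ against the ``volume'' term $\scH^{n+1}$; strictness of the combined inequality on any non-trivial test configuration can then be extracted using K-semistability of $X$ in concert with the fact that we have reduced (via Theorems~\ref{equivariance} and \ref{LX}) to $G$-equivariant test configurations with normal central fibre. Once this anchor positivity is in hand, Lemma~\ref{monotonicity} propagates log K-stability from $\beta_0$ up to $\gamma=1/m$, and Corollary~\ref{oneway} delivers K-stability of $Y$.
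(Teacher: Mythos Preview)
Your overall architecture---reduce to log K-stability of $((X,D);-K_X)$ at angle $1/m$ via Corollary~\ref{oneway}, then obtain this by interpolating between K-semistability at $\beta=1$ and an anchor at a smaller angle---is exactly the paper's strategy. The affine decomposition and the identification of $\beta_0=1-1/\lambda$ as the log Calabi--Yau angle are also correct.

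The gap is the anchor itself. You assert that $\DF_{\beta_0}\geq 0$ with strict inequality on non-trivial test configurations, but you do not prove it; the sentence about ``bounding the vertical correction $F/\lambda$ against the volume term $\scH^{n+1}$'' is not an argument. Neither $\scH^{n+1}$ nor $\scH^n\cdot F$ has a sign a priori, and nothing in your set-up controls them. Moreover, at $\beta_0$ the pair $(X,(1-\beta_0)D)$ is log Calabi--Yau, and the general statement for such pairs is only log K-\emph{semi}stability; combined with K-semistability of $X$, your convex combination then yields only $\DF_{1/m}\geq 0$, not strict positivity. The appeal to Theorems~\ref{equivariance} and~\ref{LX} does not help here: those reductions concern test configurations for $Y$, and after quotienting you get certain test configurations for $X$, but you have given no mechanism by which $G$-equivariance upstairs forces strictness downstairs.

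The paper closes this gap by citing results of Odaka--Sun rather than attempting a direct intersection-theoretic bound: for $\lambda=1$ (so $D\in|-K_X|$ smooth) their \cite[Theorem~5.4]{OS2} gives log K-\emph{stability} for all sufficiently small $\beta>0$; for $\lambda>1$ the pair is of log general type and \cite[Theorem~4.1]{OS2} again gives log K-stability for small $\beta>0$. With a strictly positive anchor in hand, Lemma~\ref{monotonicity} propagates log K-stability to every $\beta\in(0,1)$, in particular to $1/m$, and Corollary~\ref{oneway} finishes. So the missing idea is precisely an external input guaranteeing \emph{strict} log K-stability near the log Calabi--Yau angle; without it your interpolation only yields semistability of $Y$.
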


\begin{proof} If $\lambda=1$, a result of Odaka-Sun \cite[Theorem 5.4]{OS2} implies that $(X,D)$ is log K-stable for all sufficiently small $\beta>0$. Suppose now $\lambda>1$, so the pair $(X,D)$ is log general type, i.e. $K_X+D$ is ample. Then $K_X+(1-\beta)D$ is also ample for all $\beta>0$ sufficiently small, and by another result of Odaka-Sun \cite[Theorem 4.1]{OS2} $(X,D)$ is again log K-stable for all $\beta>0$ sufficiently small. 

By assumption $X$ is K-semistable. Lemma \ref{monotonicity} then implies $(X,D)$ is log K-stable for all $0 < \beta <1$. The hypotheses of Corollary \ref{oneway} therefore hold for cyclic Galois covers of \emph{all} degrees; it follows that $Y$ is K-stable. \end{proof}

\begin{remark} Even for projective space, it is a subtle problem to determine for which hypersurfaces $D\subset\pr^n$ and for which angles $\beta$ the pair $(\pr^n,D)$ is log K-stable with angle $\beta$ (even though it is automatic for $\lambda \geq 1$ as above). For example, if $D$ is of degree $d$ and $\beta < (\frac{n+1}{d}-1)n$ (e.g. if $D$ is a line this holds for all $\beta<1$), then $(\pr^n,D)$ is \emph{not} log K-stable \cite[Proposition 12]{LS}. \end{remark}

\begin{corollary}\label{automcor} Under the hypotheses of Theorem \ref{bodymaintheorem}, $Y$ has finite automorphism group. \end{corollary}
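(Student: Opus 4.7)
The strategy is to combine the K-stability of $Y$ (established in Theorem \ref{bodymaintheorem}) with Matsushima's theorem on reductivity of automorphism groups of K\"ahler-Einstein manifolds, and then show that a K-stable Fano manifold cannot have a positive-dimensional reductive automorphism group. This is exactly the route the author hints at in the introduction when writing that the needed reductivity ``follows in our case from the Yau-Tian-Donaldson conjecture.''

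Concretely, I would proceed as follows. First, by Theorem \ref{bodymaintheorem}, $Y$ is K-stable; in particular K-polystable. By the Yau-Tian-Donaldson conjecture, or equivalently by Corollary \ref{KEmetrics}, $Y$ admits a K\"ahler-Einstein metric. Matsushima's theorem then gives that $G:=\Aut(Y)$ is a reductive complex algebraic group. Suppose for contradiction that $G$ is infinite, so that its identity component $G^\circ$ is a connected reductive group of positive dimension. Any such group contains a non-trivial algebraic torus (its maximal tori are non-trivial), and therefore a non-trivial one-parameter subgroup $\rho\colon\C^*\to \Aut(Y,-K_Y)$.

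From $\rho$ one builds the product test configuration $(\scY,\scL)=(Y\times\C,\,-K_Y\boxtimes\scO_{\C})$ with $\C^*$ acting trivially on the base and by $\rho$ on $Y$. The total space is smooth and normal, the central fibre is $Y$ (hence normal), and as a test configuration it is non-trivial in the sense of the paper, since the action is not the trivial one. A direct computation (or Theorem \ref{lxformula} applied to the compactified family $Y\times\pr^1$) identifies $\DF(\scY,\scL)$ with the classical Futaki invariant of $\rho$ on $(Y,-K_Y)$. The existence of a K\"ahler-Einstein metric on $Y$ forces this Futaki invariant to vanish, so $\DF(\scY,\scL)=0$. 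This contradicts the K-stability of $Y$ and forces $G$ to be finite.

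The substantive input is the reductivity of $G$, which for the moment is known only via the analytic theory (Matsushima, plus YTD to produce the K\"ahler-Einstein metric from algebraic K-stability); the remaining structural step that a reductive group of positive dimension contains a $\C^*$, and the Futaki-equals-DF identity for product configurations, are standard. The only mild subtlety is keeping the exponent and the scaling of $\scL$ consistent so that $(\scY,\scL)$ qualifies as a test configuration in the sense used earlier in the paper, but this is routine since the product configuration inherits all required properties from $(Y,-K_Y)$.
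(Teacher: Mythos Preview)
Your proposal is correct and follows essentially the same route as the paper: K-stability of $Y$ plus the Yau--Tian--Donaldson theorem gives a K\"ahler--Einstein metric, Matsushima gives reductivity of $\Aut(Y,-K_Y)$, and a non-trivial $\C^*$-subgroup yields a non-trivial product test configuration whose Donaldson--Futaki invariant is not strictly positive, contradicting K-stability. The only tactical difference is in the final step: you invoke the vanishing of the classical Futaki invariant on a K\"ahler--Einstein manifold to get $\DF=0$, whereas the paper instead uses the inverse $\C^*$-action to produce a second product test configuration with $\DF$ of the opposite sign, forcing both to vanish; both arguments are standard and equally valid.
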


\begin{proof} By Theorem \ref{bodymaintheorem}, $Y$ is K-stable and hence admits a K\"ahler-Einstein metric. It then follows from Matsushima's Theorem \cite{YM} that $\Aut(Y,-K_Y)$ is reductive. 

Suppose there is a subgroup of $\Aut(Y,-K_Y)$ isomorphic to $\C^*$. This generates a test configuration $(\scY,\scL)=(Y\times\C,-K_Y)$, with the action on the central fibre given by the $\C^*$-subgroup. The inverse of this $\C^*$-subgroup generates another test configuration, say $(\scY',\scL')$, such that the total weight of the $\C^*$-action on $H^0(\scY_0,\scL_0^k)$ equals \emph{minus} the total weight of the $\C^*$-action on $H^0(\scY_0',\scL_0'^k)$. It follows that \[\DF(\scY,\scL) = -\DF(\scY',\scL').\] On the other hand, since $Y$ is K-stable, the Donaldson-Futaki invariants of \emph{all} test configurations are strictly positive. Hence $\Aut(Y,-K_Y)$ admits no $\C^*$-subgroups. The result follows since reductive groups with no $\C^*$-subgroups are finite. \end{proof}

\begin{remark} The same technique was used by Odaka to show that smooth varieties with ample canonical class have finite automorphism group \cite{O1}. Corollary \ref{automcor} relies on the proof of the Yau-Tian-Donaldson conjecture, namely that K-stability implies the existence of a K\"ahler-Einstein metric. There is no direct proof that K-stable varieties have reductive automorphism group, though some progress has been made \cite{CD}. \end{remark}

\section{Examples}\label{examples}

We give some rather elementary applications of Theorem \ref{intromaintheorem}, mostly by comparison with the work of Arezzo-Ghigi-Pirola \cite{AGP}. In most examples we take the base $X$ to either be projective space or a product of projective spaces. Since these admit K\"ahler-Einstein metrics, they are K-poystable \cite{RB}. It is an open problem to give an algebro-geometric proof of K-polystability of projective space.

\begin{example}[Hypersurfaces]\label{hypersurface} As mentioned in the introduction, an arbitrary smooth hypersurface of the form \[Y=V(x_0^{n}+f(x_1,\hdots, x_n))\subset \pr^{n}\] satisfies the criterion of Theorem \ref{intromaintheorem}. Arezzo-Ghigi-Pirola \cite[Proposition 3.1]{AGP} show that \[Y=V(x_0^{n}+x_1^n+f(x_2,\hdots, x_n))\subset \pr^{n}\] admits a K\"ahler-Einstein metric, while Pukhlikov's work implies that provided $Y$ is \emph{general} and $n\geq 6$, it admits a K\"ahler-Einstein metric \cite[Theorem 2]{AP}. \end{example}

\begin{example}[Double solids]\label{doublesolid} Arbitrary double covers are Galois; this corresponds to the fact that all field extensions of degree two are Galois. Let $Y$ be a Fano manifold which can be realised as a double cover of $\pr^n$ branched over a smooth hypersurface of degree $d$. Such Fano varieties are called double solids. By Riemann-Hurwitz, for $Y$ to be Fano requires $d\leq 2n$, while we require $d\geq n+1$ for the hypotheses of Theorem \ref{intromaintheorem} to apply. Thus provided $D$ is smooth, $Y$ admits a K\"ahler-Einstein metric for \[n+1 \leq d \leq 2n.\] Arezzo-Ghigi-Pirola \cite[Theorem 3.2]{AGP} showed under the same hypotheses that $Y$ admits a K\"ahler-Einstein metric for \[n+2\leq d\leq 2n,\] so Theorem \ref{intromaintheorem} again gives marginally better results. In particular, for $d=n+1$, Corollary \ref{KEmetrics} gives new examples of Fano manifolds admitting K\"ahler-Einstein metrics. Remark that, by Riemann-Hurwitz, this requires $n$ to be odd. \end{example}

\begin{example}[del Pezzos] In the case $n=3$, the hypersurfaces described in Example \ref{hypersurface} are just cubic surfaces, isomorphic to $\pr^2$ blown up at $6$ points. The surface case of the double solids with $d=4$ are isomorphic to $\pr^2$ blown up at $7$ points. These examples were given by Li-Sun \cite[Section 6]{LS}. One can similarly describe $\pr^2$ blown up at certain configurations of $8$ points as a double cover over $\pr(1:1:2)$, however as this is not smooth and it does not admit an orbifold K\"ahler-Einstein metric, Theorem \ref{intromaintheorem} does not apply.  \end{example}

\begin{example}[Threefolds] In the classification of Fano threefolds, several are defined as coverings of other Fano threefolds, and so one finds many applications of Theorem \ref{intromaintheorem}. 

\begin{itemize} \item Let $Y\to\pr^3$ be a double cover with branch divisor a sextic surface. This is another example of a double solid. In more precise terminology, $Y$ is a hyperelliptic threefold, as it has Picard rank one and its anti-canonical system determines a double cover to another Fano threefold. Theorem \ref{intromaintheorem} then implies that $Y$ is K-stable, and hence that $Y$ admits a K\"ahler-Einstein metric. This was first proven by Arezzo-Ghigi-Pirola \cite[Section 3]{AGP}. We remark that Arezzo-Ghigi-Pirola also give other examples of hyperelliptic threefolds with K\"ahler-Einstein metrics to which Theorem \ref{intromaintheorem} does not obviously apply. 
\item Let $Y\to \pr^1\times\pr^1\times\pr^1$ be a double cover branched over a divisor of tridegree $(2,2,2)$. It follows from Theorem \ref{intromaintheorem} that $Y$ is K-stable.
\item Let $X$ be a hypersurface of bidegree $(1,1)$ in $\pr^2\times\pr^2$. Alternatively $X$ can be described as a flag variety of $SL_3$ \cite[Example 3.2.3.2]{cox}. As $X$ is a smooth flag variety, it admits a K\"ahler-Einstein metric, and hence is K-polystable. Let $Y\to X$ be a double cover branched over a smooth element of $|-K_X|$. Again it follows that $X$ is K-stable. 
\end{itemize} 

It was asked by Codogni-Fanelli-Svaldi-Tasin if every Fano variety which appears as a fibre of a Mori fibre space is K-semistable \cite{CFST}. They show that each of the above Fano threefolds appears as the fibre of a Mori fibre space (note that the first example is trivial as it has Picard rank one, while the last two examples both have Picard rank two). This answers some special cases of their question. 

 \end{example}

\begin{example}[Alpha invariants] Given a pair $(X,D)$ with $X$ Fano and $D\in |-K_X|$, one can obtain lower bounds on the angles $\beta$ such that $(X,D)$ is log K-stable using Tian's alpha invariant \cite[Theorem 5.4]{OS2}. One could use this to generate more examples of covers $Y\to X$ with branch divisor $D$ such that $Y$ is K-stable using Theorem \ref{introequivalence}, even if $X$ itself is not K-stable.\end{example}

\printbibliography

\vspace{4mm}

\noindent Ruadha\'i Dervan, University of Cambridge,  UK and Universit\'e libre de Bruxelles, Belgium. \\ R.Dervan@dpmms.cam.ac.uk

\end{document}